\theoremstyle{plain}
\newtheorem{theorem}{Theorem}
\newtheorem{corollary}{Corollary}
\newtheorem{remark}{Remark}
\newtheorem{proposition}{Proposition}
\theoremstyle{definition}
\DeclareMathOperator{\ch}{ch}
\DeclareMathOperator{\Hom}{Hom}
\begin{document}

\title{Counting homomorphisms from surface groups to finite groups}
\author{Michael R. Klug}
\maketitle

 \begin{abstract}
We prove a result that relates the number of homomorphisms from the fundamental group of a compact nonorientable surface to a finite group $G$, where conjugacy classes of the boundary components of the surface must map to prescribed conjugacy classes in $G$, to a sum over values of irreducible characters of $G$ weighted by Frobenius-Schur multipliers.  The proof is structured so that the corresponding results for closed and possibly orientable surfaces, as well as some generalizations, are derived using the same methods.  We then apply these results to the specific case of the symmetric group.  
 \end{abstract}

\section{Introduction}

 Given a closed nonorientable surface, a result of Frobenius and Schur \cite{frobenius_schur} counts the number of homomorphisms from the fundamental group of the surface to a finite group $G$ in terms of dimensions of the irreducible complex representations of $G$, the Frobenius-Schur indicators of these representations, the order of $G$, and the nonorientable genus of the surface.  Given a closed orientable surface, a result of Mednykh \cite{mednykh} counts the number of homomorphisms from the fundamental group of the surface to a finite group $G$ in terms of the dimensions of the irreducible complex representations of $G$, the order of $G$, and the genus of the surface.  Given a compact oriented surface with labeled boundary components, a finite group $G$, and a choice of conjugacy class in $G$ for each boundary component of the surface, Dijkgraaf and Witten \cite{dijkgraaf_witten} as well as Freed and Quinn \cite{freed_quinn} gave formulas (in the context of mathematical physics) of a formula for the number of homomorphisms from the fundamental group of the surface to $G$, sending the conjugacy classes of the boundaries to the chosen conjugacy classes in terms of the dimensions of the irreducible complex representations of $G$, the values of the characters of these representations on the chosen conjugacy classes, the order of $G$, the sizes of the conjugacy classes, and the genus of the surface.  An elementary character theoretic proof of this was supplied by Zagier \cite{zagier_appendix}. In the case where the genus of the surface is zero, the result was also known to Frobenius \cite{frobenius}.  

 Theorem \ref{thm:main} is the missing case where the surface is compact and nonorientable with labeled boundary components.    In this paper, we supply simple character-theoretic proofs of all of these results following the treatments given by Zagier \cite{zagier_appendix} and Mulase and Yu (see section 4 of \cite{mulase_yu}).  We then draw some topological and combinatorial conclusions from these results.  For a more extensive overview of the history of these equations in the case of closed surfaces, see \cite{snyder_mednykh}.

 In section \ref{sec:algebra}, we discuss a framework for understanding all of these equations as consequence of character-theoretic ideas.  In section \ref{sec:topology}, we make the connection to 2-dimensional topology and in section \ref{sec:combinatorics}, we show how these equations can be used to deduce some symmetric function identities, following Stanley \cite{stanley}.

 \section{Counting solutions to equations in groups} \label{sec:algebra}

All of our representations are over $\mathbb{C}$.  Let $r$ be positive integer and $\gamma \in F_r$.  Then for any finite group $G$ we have the class function $f_{\gamma} = f_{\gamma, G}$ with 
$$
f_\gamma(w) = |\{(g_1,...,g_r) \in G^r : \gamma(g_1,...,g_r) = 1  \}|
$$
where $\gamma(g_1,...,g_r)$ is the image in $G$ of the homomorphism from $F_r \to G$, given by sending the $i$th generator of $F_r$ to $g_i$.  Since $f_\gamma$ is a class function, we have 
$$
f_\gamma = \sum_\chi a_\chi^\gamma \chi
$$
for some coefficients $a_\chi^\gamma \in \mathbb{C}$.  Two basic questions are: When is $f_\gamma$ a character for all $G$ (i.e., all of the $a_\chi^\gamma$ are nonnegative integers) and when is $f_\gamma$ a virtual character for all $G$ (i.e., all of the $a_\chi^\gamma$ are integers)?    

In general, we have
$$
a_\chi = \frac{1}{|G|}\sum_{(g_1,...,g_r) \in G^r} \overline{\chi}(\gamma(g_1,...,g_r)
$$
since by taking the inner product on class functions given by
$$
 \left<f_1, f_2\right> = \frac{1}{|G|} \sum_{g \in G} f_1(g) \overline{f_2(g)}
$$
for which the characters $\chi$ form an orthonormal basis, and thus
 \begin{align*}
	 a_\chi &= \frac{1}{|G|}\sum_{g \in G} f_\gamma(g) \overline{\chi}(g) \\
	        &= \frac{1}{|G|} \sum{g \in G} \sum_{(g_1,...,g_r \in G^r \text{ with } \gamma(g_1,...,g_r) = 1} \overline{\chi}(g) \\
		&= \frac{1}{|G|} \sum_{(g_1,...,g_r) \in G^r} \overline{\chi}(\gamma(g_1,...,g_r)
 \end{align*}
 This has been observed before, for example, in Proposition 3.1 in \cite{parzanchevski_schul}.  However, in special cases, such as $x_1^2x_2^2 \cdots x_k^2 \in F_k$ and $[x_1,y_1]\cdots[x_g,y_g] \in F_{2g}$, the coefficients $a_\chi^\gamma$ have a much more explicit form that we now discuss (see \cite{parzanchevski_schul} for additional results on computing $a^\gamma_\chi$ for specific types of elements $\gamma$).  As a word of warning, note that $f_\gamma$ depends on ``where $\gamma$ is'', for example, $f_{x_1}$ depends on if $x_1 \in F_1$ or in $F_2$. 

Given a irreducible character $\chi$ of a finite group $G$, the Frobenius-Schur indicator of $\chi$ is 
$$
 \nu(\chi) = \frac{1}{|G|}\sum_{g \in G} \chi(g^2)
$$
 For all $\chi$, $\nu(\chi) \in \{ -1, 0, 1\}$ with $\nu(\chi) \neq 0$ if and only if there is a nonzero $G$-invariant bilinear form on the representation with character $\chi$, $\nu(\chi) = 1$ if and only if there exists a symmetric such form, and $\nu(\chi) = -1$ if and only if there exists a skew-symmetric such form \cite{frobenius_schur}.   At the root of our discussion is the following theorem of Frobenius \cite{frobenius} (for the first equation) and Frobenius and Schur \cite{frobenius_schur} (for the second equation):

 \begin{theorem}\label{fs}(Frobenius, Schur) 
	Let $G$ be a finite group and $w \in G$ an element.  Then
$$
	 |\{(x,y) \in G \times G : xyx^{-1}y^{-1} = w \}| = \sum_{\chi} \left(\frac{|G|}{\chi(1)} \right) \chi(w)
$$
and
$$
	 |x \in G : x^2	 = w \}| = \sum_{\chi} \nu(\chi) \chi(w)
$$
where the sums are over the irreducible characters of $G$.  
\end{theorem}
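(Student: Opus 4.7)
The plan is to handle both equations uniformly using the decomposition
\[
\delta_e(g) = \frac{1}{|G|}\sum_\chi \chi(1)\chi(g)
\]
of the delta function at the identity (coming from the regular representation), applied once to the equation $xyx^{-1}y^{-1}w^{-1}=e$ and once to $x^2 w^{-1}=e$. After swapping the order of summation, this reduces each identity to computing $\sum \chi(\cdot)$ over the solution variables.

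The workhorses will be two Schur-lemma averaging identities. First I would observe that for any irreducible representation $\rho$ of $G$ with character $\chi$, both operators
\[
\frac{1}{|G|}\sum_{h \in G}\rho(hgh^{-1}) \quad\text{and}\quad \frac{1}{|G|}\sum_{x\in G}\rho(x^2)
\]
are $G$-equivariant (the second via the substitution $x\mapsto kxk^{-1}$), hence scalar by Schur's lemma. Taking traces identifies the scalars as $\chi(g)/\chi(1)$ and $\nu(\chi)/\chi(1)$ respectively. Multiplying by $\rho(k)$ and taking traces then yields
\[
\frac{1}{|G|}\sum_h \chi(hgh^{-1}k)=\frac{\chi(g)\chi(k)}{\chi(1)}, \qquad \frac{1}{|G|}\sum_x \chi(x^2 k)=\frac{\nu(\chi)}{\chi(1)}\chi(k).
\]

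For the commutator count $N(w)$, I would sum over $x$ first using the first identity with $g=y$ and $k=y^{-1}w^{-1}$, then sum over $y$ using the convolution identity $\sum_y \chi(y)\chi(y^{-1}k)=\frac{|G|}{\chi(1)}\chi(k)$, which follows from the fact that the central idempotent $e_\chi=\frac{\chi(1)}{|G|}\sum_g \overline{\chi(g)}g$ squares to itself. This yields $N(w)=\sum_\chi \frac{|G|}{\chi(1)}\chi(w^{-1})$; the appearance of $w^{-1}$ rather than $w$ is harmless because $(x,y)\mapsto(y,x)$ sends a commutator to its inverse, so $N(w)=N(w^{-1})$. For the square-root count $S(w)$, the second Schur identity applied directly gives $S(w)=\sum_\chi \nu(\chi)\chi(w^{-1})$, and $x\mapsto x^{-1}$ shows $S(w)=S(w^{-1})$.

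The main obstacle is really just the two Schur-lemma averaging identities; once those are in place, both theorems collapse into index-juggling with class functions and invocation of the regular-character formula. The only subtle bookkeeping point is the cosmetic swap between $\chi(w)$ and $\chi(w^{-1})$, handled by the natural involutions on the solution sets noted above.
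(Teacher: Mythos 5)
Your proof is correct. Note, however, that the paper does not actually prove Theorem \ref{fs}: it is quoted as a classical result of Frobenius and of Frobenius--Schur and then used as the base case for everything else, so there is no in-paper argument to compare against. Your argument is the standard self-contained one (essentially the route of Zagier's appendix, which the paper cites for the downstream results): expand the indicator function of the identity via the regular character and evaluate the resulting character sums by Schur-lemma averaging. Both of your workhorse identities are sound --- $\frac{1}{|G|}\sum_{h}\rho(hgh^{-1})$ and $\frac{1}{|G|}\sum_{x}\rho(x^{2})$ are indeed intertwiners, hence scalars $\chi(g)/\chi(1)$ and $\nu(\chi)/\chi(1)$ respectively --- and they are the same mechanism that underlies the paper's Proposition \ref{pre_magic} (the central character $\omega_\chi$) and Proposition \ref{prod} (the convolution identity $\sum_{y}\chi(y)\chi(y^{-1}k)=\frac{|G|}{\chi(1)}\chi(k)$). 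The final bookkeeping with $\chi(w^{-1})$ versus $\chi(w)$ is handled correctly by the involutions $(x,y)\mapsto(y,x)$ and $x\mapsto x^{-1}$; alternatively you could simply re-index the sum over irreducible characters by $\chi\mapsto\overline{\chi}$ and use $\overline{\chi}(w^{-1})=\chi(w)$ together with $\nu(\overline{\chi})=\nu(\chi)$ and $\overline{\chi}(1)=\chi(1)$, which is exactly the device the paper uses later in the proof of Theorem \ref{thm:general}.
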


Equivalently, Theorem \ref{fs} says that for every irreducible character $\chi$, we have
	 $$
	a^{xyx^{-1}y^{-1}}_\chi = \frac{|G|}{\chi(1)}
	 $$
for $xyx^{-1}y^{-1}$ in a free group generated by $x$ and $y$ and 
	 $$
	a^{x^2}_\chi = \nu(\chi)
	 $$
for $x^2$ in the free group generated by $x$.  

The following result (see \cite{stanley} Exercise 7.69 (d)) shows how to obtain an expression for $a_\chi^{\gamma_1\cdots \gamma_m}$ in terms of $a_\chi^{\gamma_1},..., a_\chi^{\gamma_m}$ when the words $\gamma_1, ..., \gamma_m$ contain disjoint letters.

 \begin{proposition} \label{prod}
	 Let $G$ be a finite group and let $f_1,...,f_m$ be class functions on $G$.  Define the class function $F = F_{f_1,...,f_m}$ by
	 $$
	 F(w) = \sum_{u_1\cdots c_m = w} f_1(u_1) \cdots f_m(u_m)
	 $$
	 Let $\chi$ be an irreducible character of $G$.  Then
	 $$
	 \left< F, \chi \right>  = \left( \frac{|G|}{\chi(1)} \right)^{m-1} \left< f_1, \chi \right> \cdots \left< f_m, \chi \right>
	 $$
 \end{proposition}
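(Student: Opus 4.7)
The function $F$ in the statement is the $m$-fold convolution $f_1 * \cdots * f_m$, where $(f * g)(w) = \sum_{uv = w} f(u) g(v)$. The plan is to prove the proposition by induction on $m$, with all of the substance lying in the base case $m = 2$, which in turn rests on the convolution identity for irreducible characters
\[
(\chi * \psi)(w) \;=\; \delta_{\chi,\psi}\, \frac{|G|}{\chi(1)}\, \chi(w).
\]

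To establish this identity I would pass to the group algebra $\mathbb{C}[G]$ and consider the central element $\hat{\chi} := \sum_{g \in G} \chi(g)\, g$. By Schur's lemma, $\hat{\chi}$ acts as a scalar on each irreducible representation $V_\tau$; computing the trace and invoking Schur orthogonality in the form $\sum_g \chi(g)\tau(g) = |G|\,\delta_{\chi,\bar\tau}$ identifies this scalar as $\frac{|G|}{\tau(1)}\,\delta_{\chi,\bar\tau}$, where $\bar\tau$ is the contragredient. Comparing the scalars by which $\hat{\chi}\hat{\psi}$ and $\hat{\chi}$ act on each $V_\tau$ forces
\[
\hat{\chi}\,\hat{\psi} \;=\; \delta_{\chi,\psi}\, \frac{|G|}{\chi(1)}\, \hat{\chi}
\]
in $\mathbb{C}[G]$, and reading off the coefficient of $w$ on each side gives precisely the convolution identity above.

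Given this identity, the $m = 2$ case is pure bookkeeping: expand $f_i = \sum_\chi \langle f_i, \chi\rangle\, \chi$ in the orthonormal basis of irreducible characters, distribute the convolution, and observe that only the diagonal terms survive, yielding
\[
f_1 * f_2 \;=\; \sum_\chi \frac{|G|}{\chi(1)}\, \langle f_1, \chi\rangle\, \langle f_2, \chi\rangle\, \chi.
\]
Taking the inner product with $\chi$ proves the $m = 2$ case. The inductive step then follows by associativity of convolution: write $F = (f_1 * \cdots * f_{m-1}) * f_m$, apply the $m = 2$ identity, and invoke the inductive hypothesis for the $(m-1)$-fold convolution.

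The only nontrivial step is the convolution identity for irreducible characters; everything else is linear algebra in the orthonormal basis of characters. I expect that identity to be the main technical obstacle, although it is a classical fact equivalent to the statement that the Fourier transform on $\mathbb{C}[G]$ diagonalizes convolution, and I would present it as a short preparatory lemma.
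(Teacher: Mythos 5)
Your proof is correct. Note, however, that the paper does not actually prove Proposition \ref{prod} at all: it simply cites Stanley, Exercise 7.69(d), and moves on, so there is no in-paper argument to compare against line by line. Your route --- identifying $F$ with the iterated convolution $f_1 * \cdots * f_m$, reducing by associativity and induction to the $m=2$ case, and deriving the key identity $(\chi * \psi)(w) = \delta_{\chi,\psi}\,\frac{|G|}{\chi(1)}\,\chi(w)$ from the action of the central elements $\hat{\chi} = \sum_g \chi(g)\,g$ on the irreducibles --- is a complete and standard proof of that exercise. It is also very much in the spirit of the machinery the paper \emph{does} develop: your scalar $\frac{|G|}{\tau(1)}\,\delta_{\chi,\bar\tau}$ is exactly the value of the central character $\omega_\tau$ of Proposition \ref{pre_magic} on $\hat{\chi}$, and the paper's proof of Proposition \ref{magic} runs the same kind of ``evaluate a product of central elements via $\omega_\chi$'' computation with class sums $C_i^+$ in place of your $\hat{\chi}$. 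Two small points worth making explicit in a final write-up: (i) the step ``comparing scalars forces equality'' uses that a central element of $\mathbb{C}[G]$ is determined by the scalars by which it acts on all irreducibles (Wedderburn), and (ii) the inductive step needs that the convolution of class functions is again a class function, which is immediate since the corresponding group-algebra elements are central.
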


Using this, we note that 
$$
f_{[x_1,y_1]\cdots[x_g,y_g]} = F_{f_{[x_1,y_1]},...,f_{[x_g,y_g]}}
$$
and 
$$
f_{x_1^2x_2^2 \cdots x_k^2} = F_{f_{x_1^2},...,f_{x_k^2}}
$$
Thus, from Theorem \ref{fs} together with Proposition \ref{prod}, we obtain the following formula of Mednykh \cite{mednykh} for the first equation and Frobenius and Schur \cite{frobenius_schur} for the second equation - see also \cite{mulase_yu} where this phrasing is used:

\begin{corollary} \label{MulaseYu} (Mednykh, Frobenius,Schur) 
Let $G$ be a finite group and $w \in G$ an element.  For all integers $g, k \geq 0$, we have 
$$
	f_{[x_1,y_1]\cdots[x_g,y_g]}(w) = \sum_{\chi}\left( \frac{|G|}{\chi(1)} \right)^{2g-1} \chi(w)
$$
for $[x_1,y_1]\cdots[x_g,y_g]$ in the free group of rank $2g$ with generators $x_1,y_1,...,x_g,y_g$ and 
$$	
	f_{x_1^2x_2^2 \cdots x_k^2}(w) = \sum_{\chi} \nu(\chi)^k \left( \frac{|G|}{\chi(1)} \right)^{k-1} \chi(w)
$$
for $x_1^2x_2^2 \cdots x_k^2$ in the free group of rank $k$ generated by $x_1,...,x_k$.  
\end{corollary}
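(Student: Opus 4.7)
The plan is to combine Theorem \ref{fs} with Proposition \ref{prod} in the straightforward way that the text already hints at, the only nontrivial point being the factorization identity $f_{\gamma_1 \cdots \gamma_m} = F_{f_{\gamma_1},\ldots,f_{\gamma_m}}$ when $\gamma_1,\ldots,\gamma_m$ involve pairwise disjoint generators.

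First I would verify this factorization. For $\gamma = [x_1,y_1]\cdots[x_g,y_g] \in F_{2g}$, a tuple $(g_1,h_1,\ldots,g_g,h_g) \in G^{2g}$ with $\gamma(g_1,h_1,\ldots,g_g,h_g) = w$ is determined by choosing, for each factorization $w = u_1 \cdots u_g$ in $G$, an independent tuple $(g_i,h_i)$ realizing $[g_i,h_i] = u_i$. Because the letters in distinct commutators are disjoint, these choices are genuinely independent, so
\[
f_{[x_1,y_1]\cdots[x_g,y_g]}(w) \;=\; \sum_{u_1\cdots u_g = w} \prod_{i=1}^{g} f_{[x_i,y_i]}(u_i) \;=\; F_{f_{[x_1,y_1]},\ldots,f_{[x_g,y_g]}}(w),
\]
and the same argument, applied letter by letter, gives $f_{x_1^2\cdots x_k^2} = F_{f_{x_1^2},\ldots,f_{x_k^2}}$.

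Next I would apply Proposition \ref{prod} to each $F$ above and read off the coefficients $a_\chi$. For the commutator word, the proposition gives
\[
a_\chi^{[x_1,y_1]\cdots[x_g,y_g]} \;=\; \Bigl(\tfrac{|G|}{\chi(1)}\Bigr)^{g-1} \langle f_{[x,y]}, \chi\rangle^g,
\]
and Theorem \ref{fs} identifies $\langle f_{[x,y]},\chi\rangle = a^{[x,y]}_\chi = |G|/\chi(1)$, yielding $(|G|/\chi(1))^{2g-1}$. For the squares word, the proposition gives $a_\chi = (|G|/\chi(1))^{k-1}\langle f_{x^2},\chi\rangle^k$, and Theorem \ref{fs} supplies $\langle f_{x^2},\chi\rangle = \nu(\chi)$, yielding $\nu(\chi)^k (|G|/\chi(1))^{k-1}$. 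Expanding $f_\gamma = \sum_\chi a_\chi^\gamma \chi$ gives both claimed formulas.

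The only real obstacle, and it is a minor bookkeeping point rather than a genuine difficulty, is being careful about ``where $\gamma$ lives'': each $f_{\gamma_i}$ must be computed in the free group on only the letters actually appearing in $\gamma_i$ so that the disjointness argument licenses the independent summation over $G^2$ (or $G$) for each factor. Once that is set up cleanly, the computation is a direct two-line application of Proposition \ref{prod} and Theorem \ref{fs}.
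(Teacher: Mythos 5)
Your proposal is correct and follows exactly the paper's route: the paper likewise derives the corollary by observing $f_{[x_1,y_1]\cdots[x_g,y_g]} = F_{f_{[x_1,y_1]},\ldots,f_{[x_g,y_g]}}$ and $f_{x_1^2\cdots x_k^2} = F_{f_{x_1^2},\ldots,f_{x_k^2}}$ and then combining Proposition \ref{prod} with Theorem \ref{fs}. The only difference is that you spell out the disjoint-letters independence argument behind the factorization identity, which the paper states without proof.
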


\begin{remark}
	Note that setting $w = 1$ in Corollary \ref{MulaseYu}, we see that $|G|$ divides 
$$
	 |\{(x,y) \in G \times G : xyx^{-1}y^{-1} = 1 \}|
$$
and 
$$
	 |\{(x,y) \in G \times G : xyx^{-1}y^{-1} = 1 \}|
$$
	for $g \geq 1$ and $k > 1$.  This is in fact a special case of a theorem of Solomon \cite{solomon} that implies that for any $\gamma \in F_r$ with $r > 1$, $|G|$ divides $f_{\gamma,G}(1)$.  
\end{remark}

We now mention a few more character-theoretic results that we use in the sequel (for a proof see, for example, chapter 3 of \cite{isaacs}). 

\begin{proposition} \label{pre_magic}
Let $\chi$ be an irreducible character of a finite group $G$.  Then the map
	\begin{align*}
		\omega_\chi : Z(\mathbb{C}G) &\to \mathbb{C} \\
				X = \sum_{g \in G }X_g \cdot g &\mapsto \frac{\sum_{g \in G}X_g\chi(g)}{\chi(1)}
	\end{align*}
is an algebra homomorphism where $Z(\mathbb{C}G)$ is the center of the group algebra.  
\end{proposition}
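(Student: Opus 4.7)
The plan is to prove the proposition via Schur's lemma applied to the irreducible representation corresponding to $\chi$. Fix an irreducible representation $\rho \colon G \to GL(V)$ with character $\chi$, and extend $\rho$ linearly to an algebra homomorphism $\rho \colon \mathbb{C}G \to \mathrm{End}(V)$. The key observation is that if $X = \sum_g X_g \cdot g$ lies in $Z(\mathbb{C}G)$, then $\rho(X)$ commutes with every $\rho(g)$, hence with the image of all of $\mathbb{C}G$.

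Since $V$ is irreducible, Schur's lemma then gives $\rho(X) = \lambda_X \cdot I_V$ for a uniquely determined scalar $\lambda_X \in \mathbb{C}$. Taking the trace of both sides yields
$$
\sum_{g \in G} X_g \chi(g) \;=\; \mathrm{tr}(\rho(X)) \;=\; \lambda_X \chi(1),
$$
so $\lambda_X = \omega_\chi(X)$. The assignment $X \mapsto \lambda_X$ is therefore the composition of the algebra homomorphism $\rho|_{Z(\mathbb{C}G)} \colon Z(\mathbb{C}G) \to \mathrm{End}(V)$ (landing in the scalar matrices, by the above) with the obvious isomorphism between scalar matrices and $\mathbb{C}$. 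Being the composition of two algebra homomorphisms, $\omega_\chi$ is itself an algebra homomorphism.

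There is essentially no obstacle here: the only ingredient beyond linear algebra is Schur's lemma, and the argument is the standard one. The sole subtlety is that one must check that $\rho(X)$ commutes with $\rho(h)$ for all $h \in G$ (not merely with $\rho$ restricted to a generating set), which follows directly from $X$ being central in $\mathbb{C}G$ together with $\rho$ being an algebra homomorphism. Multiplicativity of $\omega_\chi$ can alternatively be checked by hand: given $X, Y \in Z(\mathbb{C}G)$, write $\rho(X) = \omega_\chi(X) I$ and $\rho(Y) = \omega_\chi(Y) I$ so that $\rho(XY) = \omega_\chi(X)\omega_\chi(Y) I$, and then recover $\omega_\chi(XY)$ by taking traces, but this is just an unpacking of the Schur's lemma argument above.
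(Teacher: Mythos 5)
Your proof is correct, and it is the standard Schur's lemma argument; the paper does not prove this proposition itself but defers to Isaacs (Chapter 3), whose proof is essentially the one you give. Nothing further is needed.
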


\begin{proposition} \label{magic}
Let $C_1,...,C_n$ be not necessarily distinct conjugacy classes in a finite group $G$ and let $\chi$ be an irreducible character of $G$.  Then
$$
	\chi(1)^{n-1} \sum_{(c_1,...,c_n) \in C_1 \times \cdots \times C_n } \chi(c_1 \cdots c_n) = |C_1| \cdots |C_n| \chi(C_1) \cdots \chi(C_n)
$$
	where $\chi(C_i)$ denotes the value of $\chi$ on an element of $C_i$.  
\end{proposition}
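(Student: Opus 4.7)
The plan is to deduce this directly from Proposition \ref{pre_magic} by applying the algebra homomorphism $\omega_\chi$ to a product of conjugacy class sums. For each $i$, let $K_i = \sum_{c \in C_i} c$ be the formal sum of elements in the conjugacy class $C_i$, viewed as an element of the group algebra $\mathbb{C}G$. A standard fact (and the reason conjugacy class sums are introduced) is that each $K_i$ lies in $Z(\mathbb{C}G)$, so the product $K_1 \cdots K_n$ is again central and $\omega_\chi$ can be applied both to each factor and to the product.

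First, I would compute $\omega_\chi$ on a single class sum. Since $\chi$ is constant on $C_i$ with value $\chi(C_i)$, the definition in Proposition \ref{pre_magic} gives
\[
\omega_\chi(K_i) \;=\; \frac{\sum_{c \in C_i} \chi(c)}{\chi(1)} \;=\; \frac{|C_i|\,\chi(C_i)}{\chi(1)}.
\]
Next, expanding the product $K_1 \cdots K_n$ in $\mathbb{C}G$ yields
\[
K_1 \cdots K_n \;=\; \sum_{(c_1,\ldots,c_n) \in C_1 \times \cdots \times C_n} c_1 \cdots c_n,
\]
so applying $\omega_\chi$ to this side and using the definition gives
\[
\omega_\chi(K_1 \cdots K_n) \;=\; \frac{1}{\chi(1)} \sum_{(c_1,\ldots,c_n)} \chi(c_1 \cdots c_n).
\]

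Now I would invoke the fact that $\omega_\chi$ is an algebra homomorphism (the key content of Proposition \ref{pre_magic}) to equate this with $\omega_\chi(K_1)\cdots \omega_\chi(K_n) = \prod_{i=1}^n \frac{|C_i|\,\chi(C_i)}{\chi(1)}$. Setting the two expressions for $\omega_\chi(K_1 \cdots K_n)$ equal and clearing the factor $\chi(1)$ from the denominator on the left, then multiplying both sides by $\chi(1)^n$, produces the claimed identity
\[
\chi(1)^{n-1} \sum_{(c_1,\ldots,c_n)} \chi(c_1 \cdots c_n) \;=\; |C_1|\cdots|C_n|\,\chi(C_1)\cdots\chi(C_n).
\]
There is no real obstacle here; the only thing to be careful about is recording the centrality of each $K_i$ so that the multiplicativity of $\omega_\chi$ is legitimately applied, and keeping track of the powers of $\chi(1)$ that arise from the $n$ applications of the denominator in the definition of $\omega_\chi$.
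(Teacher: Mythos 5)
Your proof is correct and follows essentially the same route as the paper: both introduce the class sums $C_i^+ = \sum_{c\in C_i} c$ as central elements of $\mathbb{C}G$, apply the multiplicativity of $\omega_\chi$ from Proposition \ref{pre_magic} to the product $C_1^+\cdots C_n^+$, and compare the two resulting expressions while tracking the powers of $\chi(1)$.
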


\begin{proof}
Let $C_i^+$ be
$$
	C_i^+ = \sum_{g \in C_i} g
$$
	and note that $C_i^+$ is in the center $Z(\mathbb{C}G)$.  Then, by applying the algebra homomorphism $\omega_\chi$ from Proposition \ref{pre_magic} to the product $C_1^+ \cdots C_n^+$, we obtain
\begin{align*}
	\chi(1)^{-1} \sum_{(c_1,...,c_n) \in C_1 \times \cdots \times C_n } \chi(c_1 \cdots c_n) &= \omega_\chi(C_1^+ \cdots C_n^+)     \\
												 &= \omega_\chi(C_1^+) \cdots \omega_\chi(c_n^+) \\ 
												 &=  \chi(1)^{-n} |C_1| \cdots |C_n| \chi(C_1) \cdots \chi(C_n)
\end{align*}
as desired.  
\end{proof}

We are now prepared to prove the following result that we need in the next section.  The statement of the result is much more natural from a topological perspective, which we discuss in the next section.

\begin{theorem} \label{thm:general}
Let $\gamma \in F_r$, $G$ be a finite group, and $C_1,...,C_n$ be a not necessarily distinct conjugacy classes of $G$.  Then
	\begin{align*}
		|\{ (g_1,...,g_r,c_1,...,c_n) \in G^r \times C_1 \times &\cdots \times C_n : \gamma(g_1,...,g_n)c_1 \cdots c_n = 1 \}|  \\ &= |C_1|\cdots |C_n| \sum_\chi a_{\overline{\chi}}^\gamma \cdot \chi(1)^{1-n}\chi(C_1) \cdots \chi(C_n)
	\end{align*}

\end{theorem}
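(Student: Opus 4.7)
The plan is to convert the count into a convolution-style sum, expand the resulting class function in the character basis, and then apply Proposition \ref{magic} together with the formula for $a_\chi^\gamma$ recorded in the introduction.

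First I would observe that the equation $\gamma(g_1,\ldots,g_r)c_1\cdots c_n = 1$ is the same as $c_1\cdots c_n = \gamma(g_1,\ldots,g_r)^{-1}$. Hence, writing
\[
N(w) = |\{(c_1,\ldots,c_n) \in C_1 \times \cdots \times C_n : c_1\cdots c_n = w\}|,
\]
the quantity to compute is $\sum_{(g_1,\ldots,g_r) \in G^r} N(\gamma(g_1,\ldots,g_r)^{-1})$. Since $N$ is a class function, I would expand $N = \sum_\chi b_\chi \chi$ over irreducible characters.

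Next I would compute the coefficients $b_\chi = \langle N,\chi\rangle$. Unpacking the inner product gives
\[
b_\chi = \frac{1}{|G|}\sum_{(c_1,\ldots,c_n) \in C_1 \times \cdots \times C_n} \overline{\chi}(c_1\cdots c_n),
\]
and applying Proposition \ref{magic} to the irreducible character $\overline{\chi}$ (noting $\overline{\chi}(1) = \chi(1)$) yields
\[
b_\chi = \frac{1}{|G|}|C_1|\cdots|C_n|\,\chi(1)^{1-n}\,\overline{\chi}(C_1)\cdots \overline{\chi}(C_n).
\]
For the outer sum, the formula $a_\chi^\gamma = |G|^{-1}\sum_{(g_1,\ldots,g_r)}\overline{\chi}(\gamma(g_1,\ldots,g_r))$ from the introduction, together with $\chi(g^{-1}) = \overline{\chi}(g)$, gives
\[
\sum_{(g_1,\ldots,g_r) \in G^r}\chi(\gamma(g_1,\ldots,g_r)^{-1}) = |G|\,a_\chi^\gamma.
\]

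Combining these two computations,
\[
\sum_{(g_1,\ldots,g_r)} N(\gamma(g_1,\ldots,g_r)^{-1}) = \sum_\chi b_\chi\cdot |G|\,a_\chi^\gamma = |C_1|\cdots|C_n|\sum_\chi a_\chi^\gamma\,\chi(1)^{1-n}\,\overline{\chi}(C_1)\cdots \overline{\chi}(C_n).
\]
Finally, I would reindex the sum by replacing $\chi$ with $\overline{\chi}$ (which permutes the set of irreducible characters) to put the identity in the stated form with $a_{\overline{\chi}}^\gamma$ and $\chi(C_i)$. The only mildly delicate step is keeping the complex conjugates straight, but there is no real obstacle, since Propositions \ref{pre_magic} and \ref{magic} do all of the algebraic work.
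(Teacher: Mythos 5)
Your proof is correct and is essentially the paper's argument with the two summations organized dually: the paper fixes $(c_1,\ldots,c_n)$ and expands $f_\gamma$ in irreducible characters before applying Proposition \ref{magic}, whereas you fix $(g_1,\ldots,g_r)$ and expand the class function $N$ counting factorizations over $C_1\times\cdots\times C_n$. The ingredients --- the character expansion, the coefficient formula $a_\chi^\gamma = |G|^{-1}\sum \overline{\chi}(\gamma(g_1,\ldots,g_r))$, the identity $\overline{\chi}(g^{-1})=\chi(g)$, and Proposition \ref{magic} --- are identical, so this is the same proof.
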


\begin{proof}
	\begin{align*}
		|\{ (g_1,...,g_r,c_1,...,c_n) \in G^r \times C_1 \times &\cdots \times C_n : \gamma(g_1,...,g_n)c_1 \cdots c_n = 1 \}|  \\ &= \sum_{(c_1,...,c_n) \in C_1 \times \cdots \times C_n } f_\gamma( (c_1 \cdots c_n)^{-1}) \\
					      &=  \sum_{(c_1,...,c_n) \in C_1 \times \cdots \times C_n } a_\chi^\gamma  \cdot \chi((c_1 \cdots c_n)^{-1})) \\
					      &= \sum_\chi a_{\overline{\chi}}^\gamma \sum_{(c_1,...,c_n) \in C_1 \times \cdots \times C_n } \chi(c_1 \cdots c_n) \\
					      &= |C_1| \cdots |C_n| \sum_\chi a_{\overline{\chi}}^\gamma \cdot \chi(1)^{1-n} \chi(C_1) \cdots \chi(C_n)
	\end{align*}
	where the removal of the inverse in the third equation is justified by resumming over the complex conjugates of the characters and recalling that 
$$
\overline{\chi}(g^{-1}) = \chi(g)
$$
\end{proof}

Thus, if we have a more explicit formula for $a_\chi^\gamma$, we obtain a more explicit formula for the expression in the left hand side of Theorem \ref{thm:general}.  We now do exactly that.

\section{Relationship with 2-dimensional topology} \label{sec:topology}

In this section, we demonstrate how the following result is proven and how it relates to topology:
 
 \begin{theorem} \label{thm:main}
Let $C_1, ..., C_n$ be a collection of not necessarily distinct conjugacy classes in a finite group $G$.  Then
	 \begin{align*}
		 |\{(g_1,...,g_k,c_1,...,c_n) \in G^k \times C_1 \times &\cdots \times C_n : a_1^2 \cdots a_k^2 c_1 \cdots c_n = 1  \}| \\  &= |G|^{k-1}|C_1| \cdots |C_n| \sum_{\chi}  \nu(\chi)^k \frac{\chi(C_1) \cdots \chi(C_n)}{\chi(1)^{n + k - 2}}
	 \end{align*}
	 where the sum is over the characters of the irreducible complex representations of $G$ and where $\nu(\chi)$ denotes the Frobenius-Schur indicator of $\chi$. 
\end{theorem}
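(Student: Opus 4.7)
The plan is to derive Theorem \ref{thm:main} as a direct specialization of Theorem \ref{thm:general} to the element $\gamma = x_1^2 x_2^2 \cdots x_k^2 \in F_k$. Applying Theorem \ref{thm:general} with this choice of $\gamma$ and $r = k$ immediately rewrites the left-hand side of Theorem \ref{thm:main} as
$$
|C_1|\cdots|C_n| \sum_\chi a_{\overline{\chi}}^{x_1^2\cdots x_k^2} \cdot \chi(1)^{1-n} \chi(C_1)\cdots\chi(C_n),
$$
so the whole task reduces to identifying the coefficients $a_{\overline{\chi}}^{x_1^2\cdots x_k^2}$.

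For this, I would read off from Corollary \ref{MulaseYu} the decomposition
$$
f_{x_1^2 x_2^2 \cdots x_k^2} = \sum_\chi \nu(\chi)^k \left(\frac{|G|}{\chi(1)}\right)^{k-1} \chi,
$$
which gives $a_\chi^{x_1^2 \cdots x_k^2} = \nu(\chi)^k (|G|/\chi(1))^{k-1}$. To convert $a_\chi$ into $a_{\overline{\chi}}$, I would invoke two facts: $\chi(1) = \overline{\chi}(1)$ trivially, and $\nu(\overline{\chi}) = \nu(\chi)$ because $\nu(\overline{\chi}) = \overline{\nu(\chi)}$ and $\nu(\chi) \in \{-1,0,1\}$ is real. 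Hence $a_{\overline{\chi}}^{x_1^2\cdots x_k^2} = \nu(\chi)^k (|G|/\chi(1))^{k-1}$ as well.

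Plugging this into the expression above and collecting powers of $\chi(1)$ yields
$$
|C_1|\cdots|C_n|\,|G|^{k-1} \sum_\chi \nu(\chi)^k \chi(1)^{(1-n)-(k-1)} \chi(C_1)\cdots\chi(C_n),
$$
and since $(1-n) - (k-1) = -(n+k-2)$, this is exactly the right-hand side of Theorem \ref{thm:main}. There is no substantive obstacle here; once Theorem \ref{thm:general} and Corollary \ref{MulaseYu} are in hand, the proof is a matter of substitution and bookkeeping with exponents, with the only subtle point being the real-valued nature of $\nu(\chi)$ that allows us to pass between $\chi$ and $\overline{\chi}$ in the indicator. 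If there were an obstacle, it would lie in obtaining the explicit form of $a_\chi^{x_1^2\cdots x_k^2}$, but that was already handled in the previous section via Proposition \ref{prod} applied to Theorem \ref{fs}.
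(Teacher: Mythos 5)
Your proposal is correct and follows exactly the paper's own route: specialize Theorem \ref{thm:general} to $\gamma = x_1^2\cdots x_k^2$, read off $a_\chi^\gamma = \nu(\chi)^k(|G|/\chi(1))^{k-1}$ from Corollary \ref{MulaseYu}, and use $\nu(\overline{\chi}) = \nu(\chi)$ to conclude $a_{\overline{\chi}}^\gamma = a_\chi^\gamma$. The exponent bookkeeping $(1-n)-(k-1) = -(n+k-2)$ checks out, and your justification of the reality of $\nu$ is if anything slightly more explicit than the paper's.
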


Let $S_g$ be a closed orientable surface of genus $g$ and let $N_k$ be a closed nonorientable surface of nonorientable genus $k$.  Let $G$ be a finite group.  Noting that 
$$
 \pi_1(S_g) = \left< a_1,b_1,...,a_g,b_g | [a_1,b_1] \cdots [a_g,b_g] \right> 
$$
and 
$$
 \pi_1(N_k) = \left< a_1,...,a_k | a_1^2 \cdots a_k^2 \right> 
$$
we have 
$$
 |\Hom(\pi_1(S_g), G)| = |\{(g_1,...,g_{2g}) \in G^{2g} : [g_1,g_2] \cdots [g_{2g-1}, g_{2g}] = 1  \}|
$$
and
$$
 |\Hom(\pi_1(N_k), G)| = |\{(g_1,...,g_k) \in G^k : g_1^2 \cdots g_k^2 = 1  \}|
$$
Thus, we can rewrite the formula of Mednykh in Theorem \ref{MulaseYu} with $w=1$ as 
$$
|G|^{1-2g} |\Hom(\pi_1(S_g), G)| = \sum_\chi \chi(1)^{2-2g}
$$
and similarly, we can rewrite the formula of Frobenius and Schur in Theorem \ref{MulaseYu} with $w=1$ as 
$$
 |G|^{1-k} |\Hom(\pi_1(N_k), G)| = \sum_\chi (\nu(\chi) \chi(1))^{2-k}
$$

 Let $S_{g,n}$ denote the compact surface of genus $g$ with $n$ boundary components labeled from 1 to $n$.  Further, fix an orientation on $S_{g,n}$ which thus induces an orientation on all of the boundary components of $S_{g,n}$.  Let $C_1,...,C_n$ be a choice of $n$ not necessarily distinct conjugacy classes in $G$.  Let 
 $$
 \Hom^{(C_1,...,C_n)}( \pi_1(S_{g,n}), G)
 $$
 denote the set of homomorphisms from $\pi_1(S_{g,n})$ to $G$ such that the conjugacy class given by the $i$th boundary component using the given orientation is sent to the conjugacy class $C_i$ for $1 \leq i \leq n$. Noting that 
 $$
 \pi_1(S_{g,n}) = \left< a_1,b_1,...,a_g,b_g, c_1,...,c_n | [a_1,b_1] \cdots [a_g,b_g]c_1\cdots c_n \right> 
 $$
we have that 
$$
 |\Hom^{(C_1,...,C_n)}(\pi_1(S_{g,n}), G)| = |\{(g_1,...,g_{2g},c_1,...,c_n) \in G^{2g} \times C_1 \times \cdots \times C_n : [g_1,g_2] \cdots [g_{2g-1}, g_{2g}]c_1 \cdots c_n = 1  \}|
$$
Frobenius \cite{frobenius} proved for $g = 0$, and Dijkgraaf and Witten \cite{dijkgraaf_witten} as well as Freed and Quinn \cite{freed_quinn} proved in general that for $n \geq 1$ that
 \begin{equation} \label{eq:closed_bound}
	 |G|^{1-2g} |\Hom^{(C_1,...,C_n)}(\pi_1(S_{g,n}), G)| = |C_1| \cdots |C_n| \sum_\chi \frac{\chi(C_1)\cdots \chi(C_n)}{\chi(1)^{n + 2g -2}}
 \end{equation}
 where here $\chi(C_i)$ denotes the value of $\chi$ on any element in $C_i$.  Note that by setting $n = 0$, this gives exactly Mednykh's formula in Corollary \ref{MulaseYu}. 

We now demonstrate the use of Theorem \ref{thm:general} by giving a short proof of this formula.  For another elementary proof, see \cite{zagier_appendix}.  

\begin{proof} (of Equation \ref{eq:closed_bound})
 Let $\gamma = [x_1,y_1] \cdots [x_g,y_g]$ in the free group $F_{2g}$ generated by $x_1,y_1,...,x_g,y_g$.  By Corollary \ref{MulaseYu}, we have 
 $$
 a_\chi^\gamma = \left( \frac{|G|}{\chi(1)} \right)^{2g-1}
 $$
 for all irreducible representations $\chi$ of $G$.  Therefore by Theorem \ref{thm:general} together with the observation that
 $$
	a_{\overline{\chi}}^\gamma = a_\chi^\gamma
 $$
 for all $\chi$, the result follows. 
\end{proof}

 This formula shows that the ordering of the conjugacy classes as $(C_1,...,C_n)$ does not affect $|\Hom^{(C_1,...,C_n)}(\pi_1(S_{g,n}), G)|$. This also follows directly without the use of the formula, as noted in \cite{zagier_appendix}, since by using the identity
 $$
c_i c_{i+1} = (c_{i+1})(c_{i+1}^{-1}c_ic_{i+1}) 
 $$
we have a bijection between the set 
$$
\{(g_1,...,g_{2g},c_1,...,c_n) \in G^{2g} \times C_1 \times \cdots \times C_n : [g_1,g_2] \cdots [g_{2g-1}, g_{2g}]c_1 \cdots c_n = 1  \}
$$
and the respective set 
$$
\{(g_1,...,g_{2g},c_1,...,c_n) \in G^{2g} \times C_1 \times \cdots C_{i+1} \times C_i \times \cdots \times C_n : [g_1,g_2] \cdots [g_{2g-1}, g_{2g}]c_1 \cdots c_{i+1} c_i \cdots c_n = 1  \}
$$
given by interchanging the order of the conjugacy classes $C_{i}$ and $C_{i+1}$.  Thus, as far as the number $|\Hom^{(C_1,...,C_n)}(\pi_1(S_{g,n}), G)|$ is concerned, we only need to know $(C_1, ..., C_n)$ as a multiset.


\begin{figure}  	
	\centering
	\includegraphics[width=6cm]{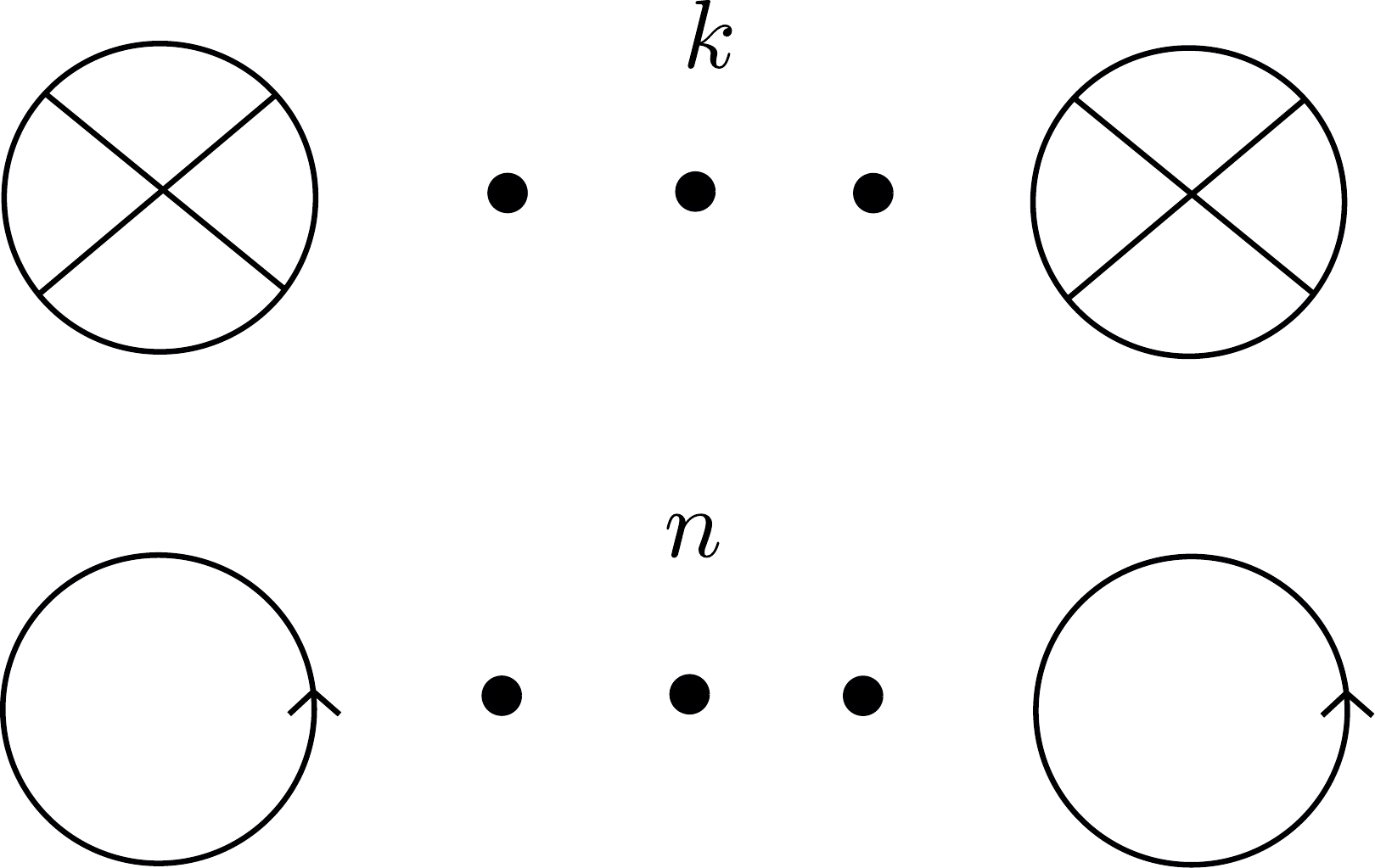}
	\caption{This figure shows the nonorientable compact surface $N_{n,k}$ of nonorientable genus $k$ with $n$ boundary components.  The picture is of a sphere (drawn as just the plane) with $k$ circles where the disks that these circles bound are removed (the lower circles) and $n$ circles (those marked with an X) where the disks that these circles bound have been removed and replaced with M\"obius bands.  The boundary components are the lower circles and they have been given a specific orientation.}
	\label{fig:nonorientable}
\end{figure}

Let $N_{k,n}$ be the compact nonorientable surface of nonorientable genus $k$ with $n$ boundary components labeled with the numbers 1 through $n$  and oriented as in Figure \ref{fig:nonorientable}.  Let 
 $$
 \Hom^{(C_1,...,C_n)}( \pi_1(N_{k,n}), G)
 $$
 denote the set of homomorphisms from $\pi_1(N_{n,k})$ to $G$ such that the conjugacy class given by the $i$th boundary component using the given orientation is sent to the conjugacy class $C_i$ for $1 \leq i \leq n$. Noting that 
 $$
 \pi_1(N_{n,k}) = \left< a_1,...,a_k, c_1,...,c_n | a_1^2 \cdots a_k^2c_1\cdots c_n \right> 
 $$
we have that 
$$
 |\Hom^{(C_1,...,C_n)}(\pi_1(S_{g,n}), G)| = |\{(g_1,...,g_k,c_1,...,c_n) \in G^k \times C_1 \times \cdots \times C_n : a_1^2 \cdots a_k^2 c_1 \cdots c_n = 1  \}|
$$
Just as above in the orientable case, we see (with or without the help of Theorem \ref{thm:main}) that this does not depend on the number of these homomorphisms and does not depend on the given ordering of the conjugacy classes $C_1,...,C_n$.  

Thus, Theorem \ref{thm:main} can be restated as 
$$
	|G|^{1-k} |\Hom^{(C_1,...,C_n)}(\pi_1(N_k), G)| = |C_1| \cdots |C_n| \sum_{\chi}  \nu(\chi)^k \frac{\chi(C_1) \cdots \chi(C_n)}{\chi(1)^{n + k - 2}}
$$
The proof of Theorem \ref{thm:main} is analogous the above proof in the orientable case.

\begin{proof}(of Theorem \ref{thm:main})
 Let $\gamma = x_1^2 x_2^2 \cdots x_k^2$ in the free group $F_{2g}$ generated by $x_1,x_2,...,x_k$.  By Corollary \ref{MulaseYu}, we have 
$$
 a_\chi^\gamma = \nu(\chi)^k \left( \frac{|G|}{\chi(1)} \right)^{k-1} 
$$
 for all irreduicble representations $\chi$ of $G$.  Therefore, by Theorem \ref{thm:general} together with the observation that $\nu(\overline{\chi}) = \nu(\chi)$ and thus
 $$
	a_{\overline{\chi}}^\gamma = a_\chi^\gamma
 $$
 for all $\chi$, the result follows. 
\end{proof}

Note that, by setting $n = 0$, this again gives Frobenius and Schur's result in Corollary \ref{MulaseYu} (though this is not another proof of that result, just an observation).

Note that $\pi_1(S_{g,n})$ and $\pi_1(N_{k,n})$ are both free groups with 
$$
\pi_1(S_{g,n}) = F_{2g + n - 1}
$$
and 
$$
\pi_1(N_{k,n}) = F_{k + n - 1}
$$
Note that for any finite group $G$ we have 
$$
|\Hom(F_m, G)| = |G|^m
$$
Using this fact and summing over the possible tuples of conjugacy classes in equation (\ref{eq:closed_bound}) and Theorem \ref{thm:main}, we obtain:

\begin{corollary}
Let $G$ be a finite group and let $g$ and $n$ be positive integers. Then
$$
	|G|^{n} = \sum_{(C_1,...,C_n)} |C_1| \cdots |C_n| \sum_{\chi}  \frac{\chi(C_1) \cdots \chi(C_n)}{\chi(1)^{n + 2g - 2}}
$$
and
$$
	|G|^{n} = \sum_{(C_1,...,C_n)} |C_1| \cdots |C_n| \sum_{\chi}  \nu(\chi)^k \frac{\chi(C_1) \cdots \chi(C_n)}{\chi(1)^{n + k - 2}}
$$
where the first sums are over all ordered $n$-tuples of conjugacy classes in $G$ and the second sums are over all irreducible complex characters of $G$.  
\end{corollary}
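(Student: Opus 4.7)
The plan is to evaluate $|\Hom(\pi_1(S_{g,n}),G)|$ in two different ways and equate the results, and then do the same thing with $N_{k,n}$ for the second identity. The key observation, already noted in the paragraph preceding the corollary, is that a compact surface with $n \geq 1$ boundary components deformation retracts onto a $1$-complex, so its fundamental group is free, with $\pi_1(S_{g,n}) \cong F_{2g+n-1}$ and $\pi_1(N_{k,n}) \cong F_{k+n-1}$. Since $|\Hom(F_m, G)| = |G|^m$, the left-hand side of each identity will appear after dividing through by an appropriate power of $|G|$.

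More concretely, for the orientable case I would first partition
\begin{equation*}
\Hom(\pi_1(S_{g,n}),G) = \bigsqcup_{(C_1,\ldots,C_n)} \Hom^{(C_1,\ldots,C_n)}(\pi_1(S_{g,n}),G),
\end{equation*}
where the disjoint union runs over all ordered $n$-tuples of conjugacy classes of $G$; this is valid because every homomorphism sends each boundary loop to a well-defined conjugacy class. Taking cardinalities gives $|G|^{2g+n-1}$ on the left. On the right, I would substitute the Dijkgraaf--Witten--Freed--Quinn formula (equation (\ref{eq:closed_bound})) for each summand, factor out the common $|G|^{2g-1}$, and then divide both sides by $|G|^{2g-1}$ to arrive at the stated identity $|G|^n = \sum_{(C_1,\ldots,C_n)} |C_1|\cdots|C_n| \sum_\chi \chi(C_1)\cdots\chi(C_n)/\chi(1)^{n+2g-2}$.

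The nonorientable identity follows by exactly the same two-ways counting argument applied to $N_{k,n}$: partition $\Hom(\pi_1(N_{k,n}),G)$ according to the boundary data, whose total cardinality equals $|G|^{k+n-1}$, and substitute Theorem \ref{thm:main} for each summand. Factoring out $|G|^{k-1}$ and dividing yields the second displayed equation.

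There is really no main obstacle here beyond bookkeeping. The one point worth being careful about is the identification of $\pi_1$ of a bordered surface with a free group of the correct rank; this can be read off by collapsing a spanning tree of the standard CW-structure (with the boundary relation no longer present once a disk is removed), or equivalently by computing the Euler characteristic of the $1$-skeleton after retraction. Once that identification is in hand, the corollary is a direct combination of the counts of $\Hom$ from a free group with the formulas in equation (\ref{eq:closed_bound}) and Theorem \ref{thm:main}.
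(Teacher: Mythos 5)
Your proposal is correct and follows essentially the same route as the paper: partition $\Hom(\pi_1(\Sigma),G)$ over ordered tuples of boundary conjugacy classes, use $|\Hom(F_m,G)|=|G|^m$ with $m=2g+n-1$ (resp.\ $k+n-1$), substitute equation (\ref{eq:closed_bound}) (resp.\ Theorem \ref{thm:main}) for each piece, and divide by the appropriate power of $|G|$. The bookkeeping checks out in both cases.
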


Then, taking the limit as $g \to \infty$ and $k \to \infty$, and noting that any bilinear form on a 1-dimensional space is automotically symmetric and therefore 1-dimensional representations never have Frobenius-Schur indicator equal to -1, we have the following:

\begin{corollary}
Let $G$ be a finite group and let $n$ be a positive integer.  
	\begin{align*}
		|G|^n &= \sum_{(C_1,...,C_n)} |C_1| \cdots |C_n| \sum_{\chi \text{ with } \chi(1) = 1}  \chi(C_1) \cdots \chi(C_n) \\
		      &= \sum_{(C_1,...,C_n)} |C_1| \cdots |C_n| \sum_{\chi \text{ with } \chi(1) = 1 \text{ and } \nu(\chi) =1}  \chi(C_1) \cdots \chi(C_n)
	\end{align*}
where the first sums are over all ordered $n$-tuples of conjugacy classes $(C_1,...,C_n)$ in $G$ and the second sums are over all irreducible complex characters of $G$ that satisfy the specified conditions.
\end{corollary}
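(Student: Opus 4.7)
The plan is to take the limits $g \to \infty$ in the first identity and $k \to \infty$ in the second identity of the preceding corollary, exploiting the fact that both left-hand sides equal the quantity $|G|^n$, which does not depend on $g$ or $k$. Since the right-hand sides are finite sums (indexed by irreducible characters of $G$ and by ordered $n$-tuples of conjugacy classes), the limit commutes with the sum and can be computed termwise.

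For the orientable identity, the $\chi$-summand carries the factor $\chi(1)^{2 - n - 2g}$. As $g \to \infty$, this factor tends to $0$ whenever $\chi(1) \geq 2$: the numerator $\chi(C_1) \cdots \chi(C_n)$ is bounded in absolute value by $\chi(1)^n$, so the vanishing is genuine rather than indeterminate. For $\chi(1) = 1$ the factor equals $1$ for every $g$, so the summand is independent of $g$. Only the one-dimensional characters survive, yielding the first equation of the stated corollary. The nonorientable case is parallel, but with the additional factor $\nu(\chi)^k$. For $\chi(1) \geq 2$, the bound $|\nu(\chi)^k| \leq 1$ is absorbed by the decay of $\chi(1)^{2 - n - k}$, and the summand still goes to zero. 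For $\chi(1) = 1$, the factor $\nu(\chi)^k$ is identically $0$ if $\nu(\chi) = 0$ and identically $1$ if $\nu(\chi) = 1$; the case $\nu(\chi) = -1$ is impossible, because any bilinear form on a one-dimensional space is automatically symmetric, so a one-dimensional irreducible representation must have Frobenius--Schur indicator in $\{0, 1\}$. This produces the second equation.

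The argument poses no real obstacle beyond the elementary bound $|\chi(g)| \leq \chi(1)$ and the observation about Frobenius--Schur indicators of linear characters. As a sanity check, one may notice that in either of the limiting formulas the contribution of any non-trivial linear character $\chi$ is identically zero, because
\[
\sum_{(C_1, \ldots, C_n)} |C_1| \cdots |C_n|\, \chi(C_1) \cdots \chi(C_n) = \Bigl(\sum_{g \in G} \chi(g)\Bigr)^n
\]
vanishes by orthogonality whenever $\chi$ is non-trivial. Thus both formulas secretly reduce to the tautology $|G|^n = |G|^n$ coming from the trivial character alone, and their substantive content lies in the mutual cancellation among the higher-dimensional summands that the finite-$g$ and finite-$k$ formulas encode before the limit is taken.
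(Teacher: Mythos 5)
Your proof is correct and follows essentially the same route as the paper, which simply takes the limits $g \to \infty$ and $k \to \infty$ in the preceding corollary and notes that one-dimensional representations cannot have Frobenius--Schur indicator $-1$; you have merely filled in the details (the bound $|\chi(C_i)| \leq \chi(1)$ justifying the termwise vanishing for $\chi(1) \geq 2$) that the paper leaves implicit. The closing observation that both identities collapse to the trivial character's contribution is a pleasant extra but not part of the paper's argument.
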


As remarked in \cite{zagier_appendix} in the orientable case, Equation (\ref{eq:closed_bound}) and Theorem \ref{thm:main} have a topological interpretation.  In the orientable case, fill in each of the $n$ boundary components of $S_{g,n}$ with disks and let $p_i$ denote the points at the center of these disks and denote the resulting closed surface by $\overline{S_{g,n}}$.  Let $G$ have a faithful action on some set $F$.  Then each element of $\Hom^{(C_1,...,C_n)}( \pi_1(S_{g,n}), G)$ gives rise to a (not necessarily connected) branched covering of $\overline{S_{g,n}}$ with Galois group $G$ and ramification points $p_i$.  Conversely, each such branched covering comes from  $\Hom^{(C_1,...,C_n)}( \pi_1(S_{g,n}), G)$.  So equation (\ref{eq:closed_bound}) gives a method for computing the number of such branched coverings.  The construction extends analogously to branched covers of nonorientable surfaces and Theorem \ref{thm:main} then supplies the relevant counting formula.

As a closing remark for this section, we mention another family of words $\gamma$ where there is a nice formula for $a_\chi^\gamma$ for all $G$ and $\chi$.  Let 
$$
[x_1,...,x_m] = x_1 \cdots x_m x_1^{-1} \cdots x_m^{-1}
$$
(these are sometimes referred to as generalized commutators).  Then a formula of Leitz \cite{leitz} (see also \cite{tambour}) says that 
$$
a_\chi^\gamma = \frac{|G|^{m-1}}{\chi(1)^{m-\epsilon_m}}
$$
where
$$
\epsilon_m = 
\begin{cases} 
	1 & \text{if $m$ is even} \\
        2 & \text{if $m$ is odd}
   \end{cases}
$$
Using this together with Proposition \ref{prod}, a formula for the $f_\gamma$ follows where $\gamma$ is a product of generalized commutators just as in Theorem \ref{MulaseYu} (and this generalizes the formula for products of commutators in Theorem \ref{MulaseYu}). Similarly, applying Theorem \ref{thm:general} with these words, we obtain a result generalizing Equation (\ref{eq:closed_bound}) (although with no apparent topological interpretation).

In a similar vein but generalizing the case of the elements $ x_1^2 x_2^2 \cdots x_k^2$, we can consider words of the form $\gamma = x^n$ in a free group of rank one generated by $x$.  In this case, the class function $f_\gamma$ counts the number of $n$th roots that each element in $G$ has.  Therefore, in this case, we have 
$$
a_\gamma^\gamma = \nu_n(\chi)
$$
where $\nu_n$ is the gereralized Frobenius-Schur indicator given by
$$
\nu_n(\chi) = \frac{1}{|G|} \sum_{g \in G} \chi(g^n)
$$
(see, for example, Chapter 4 of \cite{isaacs}).  In general, $\nu_n \in \mathbb{Z}$ so $f_\gamma$ is a virtual character.  From Proposition \ref{prod}, taking $\gamma = x_1^{n_1} x_2^{n_2} \cdots x_m^{n_m}$ in a free group of rank $m$ generated by $x_1,...,x_m$ with $n_1,...,n_m \in \mathbb{Z}$ (note that, in gereral, $f_\gamma = f_{\gamma^{-1}}$), we have 
$$
a_\gamma^\chi = \left( \frac{|G|}{\chi(1)}  \right)^{m-1} \nu_{n_1}(\chi) \cdots \nu_{n_m}(\chi)
$$
We could also involve conjugacy classes by applying Theorem \ref{thm:general}.  Similarly, we could take products of generalized commutators and powers of elements and do this again, obtaining a common generalization of all of the results.

\section{Some symmetric function identities} \label{sec:combinatorics}

In this section, we apply some of the identities from the previous sections to the symmetric group and use an isomorphism between the set of class functions on the symmetric group with a certain space of symmetric functions in order to obtain a few identities among symmetric functions.  Let $S_n$ be the symmetric group on an $n$ element set.  The relevant definitions and background come from \cite{stanley}.  Let $\Lambda$ be the ring of symmetric functions over the complex numbers and let $\Lambda^n$ be the subspace of $\Lambda$ spanned by symmetric functions of degree $n$ so that $\Lambda$ has a grading 
$$
\Lambda = \bigoplus_{n \geq 0} \Lambda^n
$$
Given a partition $\lambda$ of $n$, denoted $\lambda \vdash n$, let $p_\lambda$ and $s_\lambda$ be the associated power and Schur symmetric functions.  Let $\chi^\lambda$ be the irreducible character of $S_n$ coming from the Specht module $S^\lambda$.  Let $H_\lambda$ denote the product of the hook lengths of the Young tableau associated to $\lambda$.  Given an element $w \in S_n$, let $\rho(w)$ denote the partition of $n$ given by cycle type of $n$.  Let $R^n$ denote the set of class functions on $s_n$, then we have a vector space isomorphism
\begin{align*}
	\ch : R^n &\to \Lambda^n \\
	      f   &\mapsto \frac{1}{n!} \sum_{w \in S_n} f(w) p_{\rho(w)}
\end{align*}
and this has the property that $\ch(\chi^\lambda) = s_\lambda$.

Given $\gamma \in F_r$, then we have the class function $f_\gamma$ and 
$$
\ch(f) = \frac{1}{n!} \sum_{u_1,...,u_r \in S_n} p_{\rho(u_1,...,u_r)}
$$
and using the expansion 
$$
f_\gamma = \sum_{\lambda \vdash n} a_{\chi^\lambda}^\gamma \chi^\lambda
$$
we also have
$$
\ch(f) = \sum_{\lambda \vdash n} a_{\chi^\lambda}^\gamma s_\lambda
$$
and therefore, we have shown:

\begin{theorem}
Given $\gamma \in F_r$,
$$
\frac{1}{n!} \sum_{u_1,...,u_r \in S_n} p_{\rho(u_1,...,u_r)} = \sum_{\lambda \vdash n} a_{\chi^\lambda}^\gamma s_\lambda
$$
\end{theorem}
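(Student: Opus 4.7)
The plan is to compute the image of $f_\gamma$ under the characteristic map $\ch$ in two different ways and equate the results, exploiting the $\mathbb{C}$-linearity of $\ch$.

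First, I would substitute the definition of $f_\gamma$ directly into the formula $\ch(f_\gamma) = \frac{1}{n!}\sum_{w\in S_n} f_\gamma(w)\, p_{\rho(w)}$. Since $f_\gamma(w)$ is the number of tuples $(u_1,\ldots,u_r)\in S_n^r$ with $\gamma(u_1,\ldots,u_r)=w$, swapping the order of summation shows that each tuple in $S_n^r$ contributes exactly one copy of $p_{\rho(\gamma(u_1,\ldots,u_r))}$. Under the notational convention that $\rho(u_1,\ldots,u_r)$ in the theorem statement abbreviates $\rho(\gamma(u_1,\ldots,u_r))$, this reproduces the left-hand side of the claimed identity.

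Second, I would use the expansion $f_\gamma = \sum_{\lambda \vdash n} a_{\chi^\lambda}^\gamma \chi^\lambda$, valid because $f_\gamma$ is a class function on $S_n$ and the $\chi^\lambda$ form a basis of class functions, then apply $\ch$ term by term by linearity, and invoke the defining property $\ch(\chi^\lambda) = s_\lambda$ to obtain the right-hand side $\sum_{\lambda \vdash n} a_{\chi^\lambda}^\gamma s_\lambda$.

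Equating the two expressions for $\ch(f_\gamma)$ gives the theorem. There is no real obstacle here: the content is already built into the characteristic isomorphism and the definition of the coefficients $a_{\chi^\lambda}^\gamma$, and the argument amounts to the tautology that these two natural evaluations of $\ch(f_\gamma)$ must agree. The only thing worth flagging in the writeup is the mild notational abuse in the index $\rho(u_1,\ldots,u_r)$, which should be clarified as $\rho(\gamma(u_1,\ldots,u_r))$ before the computation begins.
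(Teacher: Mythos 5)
Your proposal is correct and matches the paper's own argument exactly: the paper likewise evaluates $\ch(f_\gamma)$ once by unwinding the definition of $f_\gamma$ inside $\frac{1}{n!}\sum_{w\in S_n} f_\gamma(w)\,p_{\rho(w)}$ and once via the expansion $f_\gamma = \sum_{\lambda\vdash n} a_{\chi^\lambda}^\gamma \chi^\lambda$ together with $\ch(\chi^\lambda)=s_\lambda$, then equates the two. Your remark about reading $\rho(u_1,\ldots,u_r)$ as $\rho(\gamma(u_1,\ldots,u_r))$ is a fair clarification of the paper's notation.
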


This is a generalization of exercise 7.68 (c) in \cite{stanley}.  Taking the examples of $x_1^2x_2^2 \cdots x_k^2 \in F_k$ and $[x_1,y_1]\cdots[x_g,y_g] \in F_{2g}$ and applying the hook length formula, which says that 
$$
\chi^\lambda(1) = \frac{n!}{H_\lambda}
$$
together with the fact that all of the Frobenius-Schur indicators for the symmetric group are $1$, we have:

\begin{corollary}
For integers $n,k,g \geq 1$, we have 
$$
	\frac{1}{n!} \sum_{u_1,...,u_k \in S_n} p_{\rho(u_1^2u_2^2 \cdots u_k^2)} = \sum_{\lambda \vdash n} H_\lambda^{k-1} s_\lambda
$$
and 
$$
	\frac{1}{n!} \sum_{u_1,v_1,...,u_g,v_g \in S_n} p_{\rho([u_1,v_1] \cdots [u_g,v_g])} = \sum_{\lambda \vdash n} H_\lambda^{2g-1} s_\lambda
$$
\end{corollary}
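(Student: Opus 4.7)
The plan is to apply the preceding theorem directly to the two words $\gamma_1 = x_1^2 x_2^2 \cdots x_k^2 \in F_k$ and $\gamma_2 = [x_1,y_1]\cdots[x_g,y_g] \in F_{2g}$, specialized to $G = S_n$. For these words the preceding theorem already gives the desired form of the left-hand side, so everything reduces to computing the coefficients $a_{\chi^\lambda}^{\gamma_i}$ for each partition $\lambda \vdash n$.

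Corollary \ref{MulaseYu} supplies closed forms for these coefficients over any finite group:
$$
a_\chi^{\gamma_1} = \nu(\chi)^k \left(\frac{|G|}{\chi(1)}\right)^{k-1}, \qquad a_\chi^{\gamma_2} = \left(\frac{|G|}{\chi(1)}\right)^{2g-1}.
$$
To evaluate these in $S_n$, I would invoke two standard facts: every irreducible representation of $S_n$ is defined over $\mathbb{Q}$ (equivalently, every permutation is conjugate to its inverse), so $\nu(\chi^\lambda) = 1$ for all $\lambda \vdash n$; and the hook length formula gives $\chi^\lambda(1) = n!/H_\lambda$, so $|S_n|/\chi^\lambda(1) = H_\lambda$. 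The paper has already noted both of these facts just before stating the corollary.

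Substituting these into the expressions from Corollary \ref{MulaseYu} yields $a_{\chi^\lambda}^{\gamma_1} = H_\lambda^{k-1}$ and $a_{\chi^\lambda}^{\gamma_2} = H_\lambda^{2g-1}$. Plugging these back into the identity
$$
\frac{1}{n!}\sum_{u_1,\ldots,u_r \in S_n} p_{\rho(\gamma(u_1,\ldots,u_r))} = \sum_{\lambda \vdash n} a_{\chi^\lambda}^{\gamma} s_\lambda
$$
from the preceding theorem, applied with $r=k$, $\gamma = \gamma_1$ in the first case and $r = 2g$, $\gamma = \gamma_2$ in the second, immediately gives the two claimed symmetric-function identities. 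There is no real obstacle: the corollary is a direct specialization, whose only content beyond the preceding theorem and Corollary \ref{MulaseYu} is the two classical facts about $S_n$ cited above.
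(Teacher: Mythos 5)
Your proposal is correct and follows the same route as the paper: both specialize the preceding theorem to the two words, read off $a_{\chi^\lambda}^{\gamma}$ from Corollary \ref{MulaseYu}, and simplify using $\nu(\chi^\lambda)=1$ and the hook length formula $\chi^\lambda(1)=n!/H_\lambda$. No discrepancies.
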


Let $1^q$ denote the vector that is $q$ ones followed by zeroes.  Recalling that 
$$
p_{\rho(w)}(1^q) = q^{\kappa(w)}
$$
where $\kappa(w)$ is the number of cycles in $w$ and that 
$$
s_\lambda(1^q) = H_\lambda^{-1} \prod_{t \in \lambda} (q + c(t))
$$
where $c(t)$ denotes the content of $\lambda$ at $t$, then by specializing to $1^q$, we obtain:

\begin{corollary}
For integers $n,k,g \geq 1$, we have 
$$
	\frac{1}{n!} \sum_{u_1,...,u_k \in S_n} q^{\kappa(u_1^2u_2^2 \cdots u_k^2)} = \sum_{\lambda \vdash n} H_\lambda^{k-2} \prod_{t \in \lambda} (q + c(t))
$$
and 
$$
	\frac{1}{n!} \sum_{u_1,v_1,...,u_g,v_g \in S_n} q^{\kappa([u_1,v_1] \cdots [u_g,v_g])} = \sum_{\lambda \vdash n} H_\lambda^{2g-2} \prod_{t \in \lambda} (q + c(t))
$$
\end{corollary}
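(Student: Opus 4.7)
The plan is to derive this corollary as a direct principal specialization of the previous one. Specifically, I would take the two identities in the previous corollary (expressed as equalities of symmetric functions in $\Lambda^n$) and evaluate both sides at the alphabet $1^q = (1,1,\ldots,1,0,0,\ldots)$ with exactly $q$ ones. Because the previous corollary is an identity in $\Lambda^n$, any specialization of the variables yields a valid numerical identity, so no new theory is needed.

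For the left hand sides, I would apply the stated identity $p_{\rho(w)}(1^q) = q^{\kappa(w)}$ termwise to the sums over $S_n$-tuples. Since $p_\rho$ is multiplicative in the parts of $\rho$ and each variable equal to $1$ contributes $1$ to every $p_i$, one has $p_\mu(1^q) = q^{\ell(\mu)}$ for any partition $\mu$, which gives exactly $q^{\kappa(w)}$ when $\mu = \rho(w)$. Applying this with $w = u_1^2 u_2^2 \cdots u_k^2$ in the first identity and with $w = [u_1,v_1]\cdots[u_g,v_g]$ in the second turns the left hand side of each identity in the previous corollary into the left hand side of the corresponding identity in the current corollary.

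For the right hand sides, I would apply the stated principal specialization of Schur functions, $s_\lambda(1^q) = H_\lambda^{-1} \prod_{t \in \lambda}(q + c(t))$, to each summand. This contributes a factor of $H_\lambda^{-1}$, which combines with the $H_\lambda^{k-1}$ (respectively $H_\lambda^{2g-1}$) in the previous corollary to yield $H_\lambda^{k-2}$ (respectively $H_\lambda^{2g-2}$), precisely matching the right hand sides in the statement.

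There is essentially no obstacle; the work is bookkeeping. The only thing worth checking carefully is that $\ch$ and its specialization are compatible in the way implicitly used: both identities in the previous corollary are equalities in $\Lambda^n$, so specializing variables is algebraically legal even when $k = 1$ or $g = 0$ would produce negative exponents of $H_\lambda$, since here we are told $n, k, g \geq 1$ and the exponents $k-2$ and $2g-2$ are simply integers that appear after the cancellation. Thus the whole derivation is a two-step evaluation, and no genuinely new ingredient is needed beyond the previous corollary and the two specialization formulas already quoted in the text.
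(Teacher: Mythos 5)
Your proposal is correct and is exactly the paper's argument: the paper obtains this corollary by specializing the previous corollary at $1^q$, using precisely the two formulas $p_{\rho(w)}(1^q) = q^{\kappa(w)}$ and $s_\lambda(1^q) = H_\lambda^{-1}\prod_{t\in\lambda}(q+c(t))$ to convert the left and right hand sides, with the factor $H_\lambda^{-1}$ accounting for the drop in the exponent from $k-1$ to $k-2$ (respectively $2g-1$ to $2g-2$). No further comment is needed.
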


The first of these is a generalization of exercise 7.68 (e) of \cite{stanley}.  More such identities can be obtained by applying the same ideas to the identities mentioned at the end of the last section.  

As a final remark, fix a word $\gamma = x_1^2x_2^2 \cdots x_k^2 \in F_k$ or $[x_1,y_1]\cdots[x_g,y_g] \in F_{2g}$ and consider the sequence $f_{\gamma, S_n}$ (here we have made the group explicitly a part of the notation) as $n$ varies.  The exponential generating function for this sequence in the case where $\gamma = x_1^2x_2^2 \cdots x_k^2 \in F_k$ is, by  Corollary \ref{MulaseYu} together with the hook length formula,
$$
\sum_{n \geq 0 } \sum_{\lambda \vdash n} \left( \frac{n!}{H_\lambda} \right)^{k-2} x^n
$$
which in the specific case of $k=2$ has the particularly nice form
\begin{equation} \label{product}
\prod_{i \geq 1} (1 - x^i)^{-1}
\end{equation}
(see \cite{stanley} exercise 5.12).  Similarly for $\gamma = [x_1,y_1]\cdots[x_g,y_g] \in F_{2g}$, we have the exponential generating function
$$
\sum_{n \geq 0 } \sum_{\lambda \vdash n} \left( \frac{n!}{H_\lambda} \right)^{2g-2} x^n
$$
which for $g = 1$ again gives the product in Equation (\ref{product}).  

\bibliography{character_identity}
\bibliographystyle{alpha}

\end{document}